\newtheorem{theorem}{Theorem}[section]
\newtheorem{corollary}{Corollary}[section]
\newtheorem{remk}{Remark}[section]
\newtheorem{lemma}{Lemma}[section]
\numberwithin{equation}{section}
\begin{document}

\title{A note on time-asymptotic bounds with a sharp algebraic rate and a transitional exponent for the sublinear Fujita problem}

\author{D. J. Needham,\\School of Mathematics,\\ University of Birmingham,\\Birmingham, B15 2TT UK,\\d.j.needham@bham.ac.uk\and J. C. Meyer, \\School of Mathematics,\\ University of Birmingham,\\Birmingham, B15 2TT UK,\\j.c.meyer@bham.ac.uk}

\maketitle
\vspace{10pt}

\begin{abstract}
This note establishes sharp time-asymptotic algebraic rate  bounds for the classical evolution problem of Fujita, but with sublinear rather than superlinear exponent. A transitional stability exponent is identified, which has a simple reciprocity relation with the classical Fujita critical blow-up exponent.
\end{abstract}

\textbf{Keywords:} sublinear Fujita problem; large-time asymptotics; transitional exponent.

\textbf{MSC2020:} 35B40; 35K57; 35B35.

\section{Introduction}
\label{sec:intro}

In this paper we address the classical Fujita problem with sublinear exponent, which takes the form of the following parabolic evolution problem, for $T>0$,
\begin{align}
   & 
   u_t = u_{xx} + [u^p]^+, ~ \forall ~ (x,t)\in D_T, 
   \tag{F1} \\
   &
   u(x,0)=u_0(x)~\forall~x\in \mathbb{R}, 
   \tag{F2} \\
   &
   u~\text{is bounded on} ~ \overline{D}_T. 
   \tag{F3}
\end{align}
Here,
\begin{equation}
    D_T = \{(x,t): x\in \mathbb{R}, t\in (0,T] \} 
\end{equation}
with $x$ being the spatial coordinate and $t$ being time, whilst the nonlinear reaction function $[(\cdot)^p]^+:\mathbb{R}\to\mathbb{R}$ has the simple form,
\begin{equation}
    [u^p]^+ = 
    \begin{cases}
        u^p,~~u\ge0,\\
        0,~~u<0 ,
    \end{cases}
\end{equation}
and we consider the situation when the exponent is sublinear, that is  $0<p<1$. 
In the present context the initial data distribution is restricted so that $u_0\in C(\mathbb{R}) \cap PC^1(\mathbb{R})$, is nontrivial and nonnegative, and has compact support (without loss of generality, we may set sppt$(u_0)\subseteq [-1,1]$); 
for convenience, we henceforth write $u_0\in K^+(\mathbb{R})$ (we remark at the end of the paper how this class of initial data may be considerably extended). 
We refer to this evolution problem as $[F(p)]$, and solutions are regarded as classical, so that $u\in C(\overline{D}_T) \cap C^{2,1}(D_T)$. For superlinear exponents $p>1$ the evolution problem $[F(p)]$ is the classical Fujita problem (see Fujita \cite{Fu}, the reviews of Levine \cite {Lev} and Deng and Levine \cite{Deng}, and the many references therein), and, in the superlinear situation, we recall, in one spatial dimension, that there is a critical blow-up exponent $p=3$, such that when $1<p\le3$, and for any initial data in $K^+(\mathbb{R})$, then $[F(p)]$ has a unique solution, and this solution undergoes spatially local blow-up (in the supnorm) in finite-$t$. 
However, when $p>3$, and the initial data has $||u_0||_{\infty}$ sufficiently small, then $[F(p)]$ has a unique solution, which is global (that is, exits on $\overline{D}_{\infty}$). 
The situation for sublinear exponents $0<p<1$ is significantly different, and this arises due to two features: 
firstly the reaction function is no longer Lipchitz continuous (due to the behaviour as $u \to 0^+$), and so the standard classical theory no longer applies to $[F(p)]$ (however, it is Hölder continuous of degree $p$); 
secondly the curvature of the reaction function on $u>0$ is now negative rather than positive. 
A detailed consideration of $[F(p)]$ with $0<p<1$ has been undertaken in Meyer and Needham \cite{Meyer_Needham_2015} and Aguirre and Escobedo \cite{Ag}. 
It is instructive to summarise the relevant key results established therein in the following:
\begin{theorem}[Aguirre and Escobedo \cite{Ag}, Meyer and Needham \cite{Meyer_Needham_2015}]
    \label{thm1.1}
    Let $0<p<1$ and $u_0\in K^+(\mathbb{R})$. 
    Then for the evolution problem $[F(p)]$ :
    \begin{enumerate}
        \item There exists a global solution $u:\overline{D}_{\infty} \to \mathbb{R}$, and this is unique.
        \item $((1-p)t)^{\frac{1}{(1-p)}}<u(x,t)<(||u_0||_{\infty}^{(1-p)} + (1-p)t)^{\frac{1}{(1-p)}}$ for all $(x,t)\in D_{\infty}$.
        \item For any $T>0$, the limit $u(x,t) \to ((1-p)t)^{\frac{1}{(1-p)}}$ as $|x|\to \infty$ holds  uniformly for $t\in [0,T]$.
        \item The classical parabolic Weak and Strong Comparison Theorems continue to hold.
    \end{enumerate}
\end{theorem}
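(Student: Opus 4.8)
The plan is to treat the four assertions in the order \emph{4, 1, 2, 3}, since each rests on the previous, and to isolate all of the delicacy caused by the failure of Lipschitz continuity of $r(u):=[u^p]^+$ at $u=0$ into the single issue of controlling solutions as $t\to0^+$. The starting observations are that $r$ is continuous, nonnegative, nondecreasing, globally Hölder of degree $p$, and \emph{concave} on $[0,\infty)$, hence locally Lipschitz on $(0,\infty)$; and that the spatially homogeneous functions $\underline u(t):=((1-p)t)^{1/(1-p)}$ and $\overline u(t):=(\|u_0\|_\infty^{1-p}+(1-p)t)^{1/(1-p)}$ are exact solutions of the scalar ODE $\phi'=\phi^p$ (with $\phi(0)=0$ and $\phi(0)=\|u_0\|_\infty$ respectively), and therefore exact $x$-independent solutions of (F1).

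For Part 4 (comparison) I would argue in two regimes. On any slab $\{t\ge\tau\}$, $\tau>0$, a solution is bounded below by a strictly positive function (Part 2), so $r$ is locally Lipschitz along it and the classical parabolic Weak and Strong Comparison/Maximum Theorems apply verbatim. Near $t=0$ the concavity of $r$ gives the one-sided estimate $r(a)-r(b)\le p\,b^{p-1}(a-b)$ for $a\ge b>0$; combined with $b\ge\underline u(t)$ this produces, for the difference $w$ of an ordered sub/supersolution pair, the linear inequality $w_t\le w_{xx}+\tfrac{p}{(1-p)t}\,w$ with coefficient singular like $1/t$. The key device is the substitution $w=t^{p/(1-p)}\tilde w$, dictated by linearizing $\phi'=\phi^p$ about $\underline u$: it converts the inequality into $\tilde w_t\le \tilde w_{xx}+\big(p\,\xi^{p-1}-\tfrac{p}{(1-p)t}\big)\tilde w$ with \emph{nonpositive} zeroth-order coefficient, after which the maximum principle closes the comparison provided one checks $\|w(\cdot,t)\|_\infty=o(t^{p/(1-p)})$ as $t\to0^+$; the latter follows from the Duhamel representation of $w$ together with $0\le r(\overline u(s))=O(s^{p/(1-p)})$.

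Existence in Part 1 I would obtain by monotone regularization: choose smooth nondecreasing Lipschitz $r_n\uparrow r$ pointwise on $\mathbb R$ with $r_n\equiv r$ on $[1/n,\infty)$; each regularized problem has a unique global classical solution $u_n$ by standard theory, the (now elementary, since $r_n$ is Lipschitz) comparison principle gives $0\le u_n\le\overline u$ and $u_n\le u_{n+1}$, so $u_n\uparrow u$, and interior Schauder estimates off $\{t=0\}$ (using the uniform $L^\infty$ bound) upgrade this to convergence in $C^{2,1}_{\mathrm{loc}}(D_\infty)$, so $u$ solves (F1)–(F3), while a heat-kernel barrier at $t=0$ gives $u\in C(\overline D_\infty)$ with $u(\cdot,0)=u_0$ (a Schauder fixed-point argument for the Duhamel map on the order interval $[0,\overline u]$ is an alternative). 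The bounds of Part 2 then follow from the Strong Comparison Theorem of Part 4: $\overline u$ is an $x$-independent solution with $\overline u(0)=\|u_0\|_\infty\ge u_0$ and $u\not\equiv\overline u$, hence $u<\overline u$; likewise $\underline u$ is an $x$-independent solution with $\underline u(0)=0\le u_0$ and $u\not\equiv\underline u$, hence $u>\underline u$ on $D_\infty$. With strict positivity in hand, uniqueness reduces to the slab $\{t\ge\tau\}$, where $r$ is locally Lipschitz and the standard contraction/energy argument applies; passing $\tau\to0$ uses exactly the $t^{p/(1-p)}$-rescaling of the previous paragraph applied to $w=u_1-u_2$, which has zero initial trace.

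Finally, Part 3: $u$ is sandwiched below by $\underline u(t)$, and from above I would, for each fixed $T>0$, compare $u$ on the region $\{|x|>2\}$ (where $u_0\equiv0$) with a supersolution of the form $\underline u(t)+\psi(x,t)$, $\psi\ge0$; using $\underline u'=r(\underline u)$ and concavity, $z=\underline u+\psi$ is a supersolution once $\psi_t\ge\psi_{xx}+\tfrac{p}{(1-p)t}\psi$, which under $\psi=t^{p/(1-p)}h$ reduces to $h$ being a supersolution of the plain heat equation, so one chooses $h$ (equivalently $\psi$) decaying to $0$ as $|x|\to\infty$ while dominating the known lateral bound $\overline u(t)\le\overline u(T)$ at $|x|=2$; the resulting bound $0\le u(x,t)-\underline u(t)\le C(T)\,e^{-x^2/C(T)}$ is uniform in $t\in[0,T]$, which is the claim. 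The main obstacle throughout is the $t\to0^+$ initial layer: on every $\{t\ge\tau\}$ the problem is a classical smooth-coefficient parabolic one, and the entire content of the theorem is that the self-similar rescaling dictated by $\phi'=\phi^p$ tames the non-Lipschitz reaction near $t=0$; obtaining the rate $\|w(\cdot,t)\|_\infty=o(t^{p/(1-p)})$ cleanly, and upgrading Weak to Strong comparison for competitors that meet at $u=0$ when $t=0$, are the steps I expect to demand the most care.
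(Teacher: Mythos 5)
First, a point of reference: the paper offers no proof of Theorem \ref{thm1.1} at all --- it is imported wholesale from Aguirre--Escobedo \cite{Ag} and Meyer--Needham \cite{Meyer_Needham_2015} --- so your proposal can only be measured against those sources. Your overall architecture (monotone Lipschitz regularisation for existence, concavity of $u\mapsto u^p$ to get a one-sided linearised coefficient $p((1-p)t)^{-1}$, and the weight $t^{p/(1-p)}$ coming from linearising $\phi'=\phi^p$ about $u_h$) is the right one and is exactly the structure the paper itself exploits later in \eqref{eqnw} and \eqref{eqn2.4}. The upper bound in Part 2 and Part 3 are essentially fine as you sketch them (modulo a matching issue at the parabolic corner $|x|=2$, $t=0$ in your Part 3 barrier, where you need $u(\pm2,t)-u_h(t)=O(t^{p/(1-p)})$ rather than the cruder $O(t)$ your Duhamel bound gives when $p>1/2$).

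The genuine gap is a circularity in the lower bound of Part 2, which then propagates into uniqueness. An \emph{unconditional} weak comparison theorem is false for this problem: $\overline v\equiv 0$ is a supersolution and $\underline v=((1-p)t)^{1/(1-p)}$ is a subsolution (indeed both are solutions), they are ordered at $t=0$, yet $\underline v>\overline v$ for $t>0$. Your own formulation of Part 4 correctly avoids this by inserting the hypothesis ``$b\ge\underline u(t)$'', i.e.\ the supersolution must already dominate $((1-p)t)^{1/(1-p)}$ on the set where the ordering fails. But you then invoke precisely this comparison theorem, with $u$ in the role of the supersolution, to prove that $u>((1-p)t)^{1/(1-p)}$ --- the hypothesis is the conclusion. (Trying to repair it with the chord estimate $r(a)-r(b)\le a^{p-1}(a-b)$, $a=\underline u$, gives coefficient $((1-p)t)^{-1}$ and hence weight $t^{1/(1-p)}$, and the required initial rate $\|(\underline u-u)^+\|_\infty=o(t^{1/(1-p)})$ fails, since $\underline u$ itself is only $O(t^{1/(1-p)})$.) The missing ingredient is the instantaneous-positivity amplification argument of \cite{Ag}: starting from $u\ge e^{t\Delta}u_0>0$ (which uses only the linear maximum principle and the nontriviality of $u_0$), one iterates the Duhamel inequality $u(t)\ge\int_{t_0}^{t}e^{(t-s)\Delta}u(s)^p\,ds$ to bootstrap any positive seed up to the full ODE rate $((1-p)t)^{1/(1-p)}$, uniformly in $x$. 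This is the heart of the theorem, and it is also exactly where nontriviality of $u_0$ enters essentially: uniqueness is \emph{false} for $u_0\equiv0$, yet your uniqueness argument uses $u_0\not\equiv0$ only through the (circularly obtained) strict positivity, so as written it would ``prove'' uniqueness for zero data as well. Once the amplification lemma is in place, your $t^{p/(1-p)}$-rescaled Gronwall does close uniqueness, though the rate $\|w^+(t)\|_\infty=o(t^{p/(1-p)})$ should be obtained from the H\"older estimate $\|w^+(t)\|_\infty\le\int_0^t\|w^+(s)\|_\infty^p\,ds$, whence $\|w^+(t)\|_\infty\le((1-p)t)^{1/(1-p)}$, and not from ``$r(\overline u(s))=O(s^{p/(1-p)})$'', which is $O(1)$ for nontrivial $u_0$.
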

We observe immediately from the inequality in the second point above that,
\begin{equation}
    u(x,t) \sim ((1-p)t)^{\frac{1}{(1-p)}}~\text{as}~t\to \infty, \label{eqn1.3}
\end{equation}
uniformly for $x\in \mathbb{R}$. 
More specifically, we have,
\begin{equation}
    0<u(x,t) - ((1-p)t)^{\frac{1}{(1-p)}}< \frac{1}{2}||u_0||_{\infty}^{(1-p)}(1-p)^{-1}((1-p)t)^{\frac{p}{(1-p)}}~\text{as}~t\to \infty, \label{eqn1.5}
\end{equation}
uniformly for $x\in \mathbb{R}$. 
Our objective here is to replace the bounds in \eqref{eqn1.5} with sharp estimates in the algebraic rate. 
Our principal result can be stated as:
\begin{theorem}
    \label{thm1.2}
    Let $0<p<1$ and $u:\overline{D}_{\infty}\to \mathbb{R}$ be the solution to $[F(p)]$. 
    Then for each $u_0\in K^+(\mathbb{R})$, the following lower bound holds:
    \begin{equation}
        \label{eqn1.6}
        u(x,t) - ((1-p)t)^{\frac{1}{(1-p)}} \ge c_{-}(x,t,p,u_0) ((1-p)t)^{\frac{(3p-1)}{2(1-p)}} 
    \end{equation}
    as $t\to \infty$ uniformly for $x\in \mathbb{R}$. 
    Conversely, for each $u_0\in K^+(\mathbb{R})$ the following upper bound holds:
    \begin{equation}
        \label{eqn1.7}
        u(x,t) - ((1-p)t)^{\frac{1}{(1-p)}} \le c_{+}(x,t,p,u_0) ((1-p)t)^{\frac{(3p-1)}{2(1-p)}} 
    \end{equation}
    as $t\to \infty$ uniformly for $x\in \mathbb{R}$. 
    Here the positive functions $c_{\pm}(x,t,p,u_0)$ are bounded as $t\to\infty$ uniformly for $x\in \mathbb{R}$, and are explicitly given by the the Gaussian convolution forms,
    \begin{equation}
        c_{-}(x,t,p,u_0) = \frac{(1+||u_0||_{\infty})^{-1}}{2\sqrt{\pi (1-p)}} \int_{-1}^{1}{u_0(s)\exp{\left(-\frac{(s-x)^2}{4t}\right)}}ds
    \end{equation}
    and
    \begin{equation}
        c_{+}(x,t,p,u_0) = \frac{(1-p)^{\frac{(1-2p)}{(1-p)}}}{\sqrt{2\pi (1-p)}} \int_{-\infty}^{\infty}{\mathcal{E}(s;p,u_0)\exp{\left(-\frac{(s-x)^2}{4(t-1)}\right)}}ds  
    \end{equation}
    for each $x\in \mathbb{R}$ and $t>1$, where,
    \begin{equation}
        \label{ee}
        \mathcal{E}(s;p,u_0) =  ((1-p) + \Delta(s,u_0)^{(1-p)})^{\frac{1}{(1-p)}} - (1-p)^{\frac{1}{(1-p)}}, 
     \end{equation}
   and
    \begin{equation}
        \Delta(s,u_0) = \frac{1}{2\sqrt{\pi}} \int_{-1}^{1}{u_0(w)\exp{\left(-\frac{1}{4}(w-s)^2\right)}}dw \label{eqn1.e3}
    \end{equation}
    for all $s\in \mathbb{R}$.
\end{theorem}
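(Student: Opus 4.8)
\emph{Proof proposal.} Write $\phi(t)=((1-p)t)^{1/(1-p)}$ (so $\phi'=\phi^p$, $\phi(0)=0$), let $[S(t)f](x)=\frac{1}{2\sqrt{\pi t}}\int_{\mathbb R}f(s)e^{-(s-x)^2/4t}\,ds$ be the one--dimensional heat semigroup, and recall from Theorem~\ref{thm1.1}(2) that $\phi(t)<u(x,t)<\hat u(t):=(\|u_0\|_\infty^{1-p}+(1-p)t)^{1/(1-p)}$ on $D_\infty$, so that $[u^p]^+=u^p$ and all quantities below are well defined. The plan is to use two nonlinear changes of variable that convert $[F(p)]$ into sub/super--solution problems for the heat equation, and then read off \eqref{eqn1.6}--\eqref{eqn1.7} by elementary comparison.

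\emph{Lower bound.} I would set $W=u^{1-p}$. On $D_\infty$ (where $u>0$) one computes the identity $W_t=W_{xx}+(1-p)+\frac{p}{1-p}\frac{W_x^2}{W}$, with $W(\cdot,0)=u_0^{1-p}$. The gradient term is nonnegative, so $W-(1-p)t$ is a supersolution of the heat equation with datum $u_0^{1-p}$, and since it is bounded (it equals $u^{1-p}-(1-p)t<\|u_0\|_\infty^{1-p}$) comparison gives $u^{1-p}\ge(1-p)t+[S(t)(u_0^{1-p})](x)$. Inverting via the tangent--line bound $(a+h)^{1/(1-p)}\ge a^{1/(1-p)}+\frac{1}{1-p}a^{p/(1-p)}h$ (convexity of $z\mapsto z^{1/(1-p)}$), with $a=(1-p)t$ and $h=[S(t)(u_0^{1-p})](x)\ge0$, yields $u-\phi\ge\frac{1}{1-p}((1-p)t)^{p/(1-p)}[S(t)(u_0^{1-p})](x)$. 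Collecting the algebraic factors (so that $\frac{1}{1-p}((1-p)t)^{p/(1-p)}/(2\sqrt{\pi t})=((1-p)t)^{(3p-1)/(2(1-p))}/(2\sqrt{\pi(1-p)})$) and using $u_0(s)^{1-p}\ge(1+\|u_0\|_\infty)^{-1}u_0(s)$ on $[-1,1]$ (because $u_0(s)^p\le1+\|u_0\|_\infty$) gives exactly \eqref{eqn1.6} with the stated $c_-$, which is bounded since $\int_{-1}^1u_0(s)e^{-(s-x)^2/4t}\,ds\le\|u_0\|_{L^1}$.

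\emph{Upper bound.} I would first bound $u$ at $t=1$. Put $\xi=(u^{1-p}-(1-p)t)^{1/(1-p)}$; by Theorem~\ref{thm1.1}(2) this is positive and $C^{2,1}$ on $D_\infty$, continuous up to $t=0$ with $\xi(\cdot,0)=u_0$, and bounded ($\xi^{1-p}=u^{1-p}-(1-p)t<\|u_0\|_\infty^{1-p}$). Using the $W$--identity one finds $\xi_t=\xi_{xx}-\frac{p(1-p)t\,\xi_x^2}{\xi\,u^{1-p}}\le\xi_{xx}$, i.e. $\xi$ is a bounded subsolution of the heat equation with datum $u_0$, so $\xi\le S(t)u_0$; equivalently $u(x,t)^{1-p}\le(1-p)t+([S(t)u_0](x))^{1-p}$. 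At $t=1$, since $[S(1)u_0](x)=\Delta(x,u_0)$ by \eqref{eqn1.e3}, this reads $u(x,1)\le((1-p)+\Delta(x,u_0)^{1-p})^{1/(1-p)}$, i.e. $u(x,1)-\phi(1)\le\mathcal E(x;p,u_0)$ by \eqref{ee}. To propagate from $t=1$ I would use $Y=u/\phi>1$, which satisfies $Y_t=Y_{xx}+\frac{1}{(1-p)t}(Y^p-Y)$; concavity of $z\mapsto z^p$ at $z=1$ gives $Y^p-Y\le-(1-p)(Y-1)$, so $R:=Y-1\ge0$ satisfies $R_t-R_{xx}\le-R/t$ on $t\ge1$. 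Since $\Psi(x,t):=\frac{1}{t}[S(t-1)R(\cdot,1)](x)$ solves $\Psi_t-\Psi_{xx}=-\Psi/t$ with $\Psi(\cdot,1)=R(\cdot,1)$, the function $t(R-\Psi)$ is a bounded subsolution of the heat equation on $\{t\ge1\}$ vanishing at $t=1$, hence nonpositive, giving $Y(x,t)-1\le\frac{1}{t}[S(t-1)(Y(\cdot,1)-1)](x)$. Inserting $Y(\cdot,1)-1\le\mathcal E(\cdot;p,u_0)/\phi(1)$ (and noting $\mathcal E(\cdot;p,u_0)\in L^1(\mathbb R)$ by the Gaussian decay of $\Delta$), multiplying by $\phi(t)$, and using $\phi(t)/(t\phi(1))=t^{p/(1-p)}$ gives
\[
u(x,t)-\phi(t)\le\frac{t^{p/(1-p)}}{2\sqrt{\pi(t-1)}}\int_{\mathbb R}\mathcal E(s;p,u_0)\,e^{-(s-x)^2/4(t-1)}\,ds;
\]
the prefactor equals $\frac{1}{2\sqrt\pi}\sqrt{t/(t-1)}\,t^{(3p-1)/(2(1-p))}$, while the prefactor in \eqref{eqn1.7} simplifies (its powers of $(1-p)$ cancel) to $\frac{1}{\sqrt{2\pi}}\,t^{(3p-1)/(2(1-p))}$, and $\frac{1}{2\sqrt\pi}\sqrt{t/(t-1)}\le\frac{1}{\sqrt{2\pi}}$ for $t\ge2$, so \eqref{eqn1.7} holds for all large $t$, with $c_+$ bounded since $\int_{\mathbb R}\mathcal E(s;p,u_0)e^{-(s-x)^2/4(t-1)}\,ds\le\|\mathcal E(\cdot;p,u_0)\|_{L^1}$.

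\emph{Main obstacle.} The creative input is the choice of the substitutions $W=u^{1-p}$, $\xi=(u^{1-p}-(1-p)t)^{1/(1-p)}$ and $Y=u/\phi$; once these are in hand the estimates are essentially mechanical, and all three comparisons are with the linear heat equation rather than with $[F(p)]$ itself. I expect the one genuinely careful point to be the comparison arguments on the unbounded strip: $W$ and $\xi$ are only $C^{2,1}$ for $t>0$ (not up to $t=0$), so the initial condition is attained merely as a continuous limit, and the spatial domain is $\mathbb R$, so one must invoke a Phragmén--Lindelöf form of the parabolic maximum principle --- permissible here because $W-(1-p)t$, $\xi$ and $u-\phi$ are bounded in $x$ uniformly in $t$ on finite time intervals (indeed $\xi<\|u_0\|_\infty$) by Theorem~\ref{thm1.1}(2), and $[F(p)]$'s own comparison principle (Theorem~\ref{thm1.1}(4)) covers the one place one wants it. A secondary chore is simply the bookkeeping that the Gaussian prefactors reproduce exactly the constants $c_\pm$ of the statement.
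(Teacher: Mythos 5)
Your argument is correct, and it reaches exactly the paper's comparison functions, but by a genuinely different mechanism. The paper works entirely inside the nonlinear problem: it exhibits the explicit subsolution $\underline{u}=\left((1-p)t+(1+\|u_0\|_\infty)^{-1}\mathcal{D}\right)^{1/(1-p)}$ and supersolution $\overline{u}=((1-p)t)^{1/(1-p)}+\mathcal{W}$ (with $\mathcal{W}$ solving the linearised problem \eqref{eqn2.4} seeded at $t=1$ by $\mathcal{E}$), verifies $\mathcal{N}(\underline{u})\le 0$ and $\mathcal{N}(\overline{u})\ge 0$ by direct computation (the Mean Value Theorem step in Lemma \ref{lem3.3} playing the role of your concavity inequality $Y^p-Y\le-(1-p)(Y-1)$), and then invokes the nonlinear Weak Comparison Theorem of Theorem \ref{thm1.1}(4); the intermediate bound $u\le((1-p)t+\mathcal{D}^{1-p})^{1/(1-p)}$ is imported from Proposition 9.2 of the Meyer--Needham monograph. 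You instead transform the solution itself ($W=u^{1-p}$, $\xi=(u^{1-p}-(1-p)t)^{1/(1-p)}$, $Y=u/\phi$) so that every comparison is against the \emph{linear} heat equation via Phragm\'en--Lindel\"of; this rederives Lemma \ref{lem3.1} and the bound \eqref{eqn3.4} without the external citation, and your $\frac{1}{t}S(t-1)$ propagator is precisely $\mathcal{W}/\phi$, so the constants $c_\pm$ come out identically (I checked the exponent bookkeeping, including the cancellation of the powers of $(1-p)$ in $c_+$ and the requirement $t\ge 2$ for $\sqrt{t/(t-1)}\le\sqrt{2}$, which is harmless since \eqref{eqn1.7} is asserted as $t\to\infty$). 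What your route buys is self-containedness at the level of the heat equation and a transparent explanation of why the linearisation about $u_h$ governs the rate; what it costs is that the transformations need the strict lower bound $u>((1-p)t)^{1/(1-p)}$ of Theorem \ref{thm1.1}(2) to keep $W$, $\xi$ and $Y$ smooth and the gradient terms single-signed, and that each of the three comparisons must be justified on the unbounded strip for functions that are only continuous (not $C^{2,1}$) up to the initial time --- points you correctly flag and which are covered by the boundedness estimates you cite. The one genuinely nonstandard-looking step, the observation that $t(R-\Psi)$ is a bounded heat subsolution vanishing at $t=1$, is sound: $(t\Phi)_t-(t\Phi)_{xx}=\Phi+t(\Phi_t-\Phi_{xx})\le\Phi-\Phi=0$.
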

A consequence of the inequalities \eqref{eqn1.6} and \eqref{eqn1.7} is:
\begin{corollary}
    \label{cor1.1}
    Let $0<p<1$, and $u:\overline{D}_{\infty}\to \mathbb{R}$ be the solution to $[F(p)]$. 
    Then there is a transitional stability exponent $p=1/3$, such that,
    \begin{itemize}
        \item when $0<p<1/3$ and $u_0\in K^+(\mathbb{R})$
        then $\text{sup}_{x\in\mathbb{R}}(u(x,t) - ((1-p)t)^{\frac{1}{(1-p)}}) \to 0^+$ as $t\to \infty$, and at a precise algebraic rate of $((1-p)t)^{\frac{(3p-1)}{2(1-p)}}$;
        \item when $1/3<p<1$ and $u_0\in K^+(\mathbb{R})$, then $\text{sup}_{x\in\mathbb{R}}(u(x,t) - ((1-p)t)^{\frac{1}{(1-p)}}) \to +\infty$ as $t\to \infty$, and at a precise algebraic rate of $((1-p)t)^{\frac{(3p-1)}{2(1-p)}}$;
        \item when $p=1/3$ and $u_0\in K^+(\mathbb{R})$ then $\text{sup}_{x\in\mathbb{R}}(u(x,t) - ((1-p)t)^{\frac{1}{(1-p)}})$ is bounded below and above as $t\to \infty$ by the positive constants $\overline{c}_-(u_0)$ and $\overline{c}_+(u_0)$ respectively, which are given by,
        \begin{equation}
        \overline{c}_{-}(u_0) = \frac{(1+||u_0||_{\infty})^{-1}}{2\sqrt{2\pi/3}} \int_{-1}^{1}{u_0(s)}ds
    \end{equation}
    and
    \begin{equation}
      \overline{c}_{+}(u_0) = \frac{1}{\sqrt{2\pi}} \int_{-\infty}^{\infty}{\mathcal{E}(s;1/3,u_0)}ds  
    \end{equation}
    with $\mathcal{E}(s;1/3,u_0)$ as given via \eqref{ee}.
    \end{itemize}
\end{corollary}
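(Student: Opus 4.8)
\emph{Proof strategy for Corollary \ref{cor1.1}.} The plan is to read the three regimes directly off the two-sided bound \eqref{eqn1.6}--\eqref{eqn1.7} of Theorem \ref{thm1.2}, once the behaviour of $\sup_{x\in\mathbb{R}}c_\pm(x,t,p,u_0)$ as $t\to\infty$ has been determined and one has noted that the transitional exponent $\gamma(p):=\tfrac{3p-1}{2(1-p)}$ is negative for $0<p<1/3$, zero for $p=1/3$, and positive for $1/3<p<1$, so that $r(t):=((1-p)t)^{\gamma(p)}$ tends respectively to $0$, to $1$, and to $+\infty$. Writing $M(t):=\sup_{x\in\mathbb{R}}\bigl(u(x,t)-((1-p)t)^{1/(1-p)}\bigr)$, the first step is to take the supremum over $x$ in \eqref{eqn1.6} and \eqref{eqn1.7}; since $r(t)>0$ and multiplication by a constant commutes with $\sup_x$, this yields, for all sufficiently large $t$,
\[
 r(t)\,\sup_{x\in\mathbb{R}}c_-(x,t,p,u_0)\ \le\ M(t)\ \le\ r(t)\,\sup_{x\in\mathbb{R}}c_+(x,t,p,u_0),
\]
with $M(t)>0$ by \eqref{eqn1.6} (equivalently by Theorem \ref{thm1.1}(2)).

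The second step is to control the two prefactors. For $c_-$: since $u_0\ge0$ is supported in $[-1,1]$ and the Gaussian factor is at most $1$, one has $c_-(x,t,p,u_0)\le\mu_-(p,u_0):=\tfrac{(1+\|u_0\|_\infty)^{-1}}{2\sqrt{\pi(1-p)}}\int_{-1}^1 u_0(s)\,ds$ for every $x$ and $t$, while $c_-(0,t,p,u_0)\to\mu_-(p,u_0)$ as $t\to\infty$ because $e^{-s^2/(4t)}\to1$ uniformly on $[-1,1]$; hence $\sup_x c_-(x,t,p,u_0)\to\mu_-(p,u_0)$, which is strictly positive since $u_0$ is nontrivial and nonnegative. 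For $c_+$: I would first check that $\mathcal{E}(\cdot;p,u_0)\in L^1(\mathbb{R})$ with $\int_{\mathbb{R}}\mathcal{E}(s;p,u_0)\,ds>0$. Indeed $\mathcal{E}\ge0$, and since $\delta\mapsto((1-p)+\delta)^{1/(1-p)}$ is convex while $\Delta(\cdot,u_0)$ is uniformly bounded (by $\|u_0\|_\infty$), a mean-value estimate gives a pointwise bound $\mathcal{E}(s;p,u_0)\le C(p,u_0)\,\Delta(s,u_0)^{1-p}$; as $\Delta(s,u_0)$ decays like $e^{-(|s|-1)^2/4}$ when $|s|\to\infty$, the right-hand side is integrable, and positivity of the integral follows from $\Delta(\cdot,u_0)>0$ everywhere. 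Then, again because the Gaussian weight is at most $1$, $c_+(x,t,p,u_0)\le\mu_+(p,u_0):=\tfrac{(1-p)^{(1-2p)/(1-p)}}{\sqrt{2\pi(1-p)}}\int_{\mathbb{R}}\mathcal{E}(s;p,u_0)\,ds<\infty$ for all $x$ and $t>1$, hence $\sup_x c_+(x,t,p,u_0)\le\mu_+(p,u_0)$.

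The final step is to assemble the cases. Combining the displayed sandwich with the two prefactor estimates gives $\mu_-(p,u_0)\le\liminf_{t\to\infty}M(t)/r(t)$ and $\limsup_{t\to\infty}M(t)/r(t)\le\mu_+(p,u_0)$, so $M(t)/r(t)$ lies between two fixed positive numbers for all large $t$. If $0<p<1/3$ then $r(t)\to0$, so $0<M(t)\le\mu_+(p,u_0)\,r(t)\to0$, whence $M(t)\to0^+$ at the precise algebraic rate $r(t)=((1-p)t)^{\gamma(p)}$; if $1/3<p<1$ then $r(t)\to+\infty$ and $M(t)\ge\tfrac12\mu_-(p,u_0)\,r(t)\to+\infty$ for large $t$, again at that rate. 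If $p=1/3$ then $\gamma(1/3)=0$ and $r(t)\equiv1$, and a direct computation using $1-p=2/3$ gives $\mu_-(1/3,u_0)=\overline{c}_-(u_0)$ and $\mu_+(1/3,u_0)=\overline{c}_+(u_0)$; since $\sup_x c_-(x,t,1/3,u_0)\le M(t)\le\sup_x c_+(x,t,1/3,u_0)$ with the left side tending to $\overline{c}_-(u_0)$ and the right side at most $\overline{c}_+(u_0)$ for every $t>1$, we conclude $\liminf_{t\to\infty}M(t)\ge\overline{c}_-(u_0)$ and $\limsup_{t\to\infty}M(t)\le\overline{c}_+(u_0)$, as claimed. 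I expect the only step with genuine content to be the integrability of $\mathcal{E}(\cdot;p,u_0)$ which makes $\mu_+(p,u_0)$ finite; everything else is elementary, the substantive analysis having already been done in Theorem \ref{thm1.2}.
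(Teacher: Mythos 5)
Your proposal is correct and follows exactly the route the paper intends: the authors state that Corollary \ref{cor1.1} ``is a direct consequence of Theorem \ref{thm1.2} (and needs no further proof)'', and your argument simply makes that deduction explicit by taking suprema in \eqref{eqn1.6}--\eqref{eqn1.7}, bounding $\sup_x c_\pm$ via the trivial Gaussian estimate (with the useful check that $\mathcal{E}(\cdot;p,u_0)\in L^1(\mathbb{R})$, which the paper assumes implicitly in defining $I(p,u_0)$), and reading off the sign of $(3p-1)/(2(1-p))$; the evaluations $\mu_\pm(1/3,u_0)=\overline{c}_\pm(u_0)$ also check out.
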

This corollary is a direct consequence of Theorem 1.2 (and needs no further proof) and the results can be interpreted in terms of the spatio-temporal stability of the spatially homogeneous state $u=u_h(t)\equiv ((1-p)t)^{\frac{1}{(1-p)}}$:
\begin{remk}
    The spatially homogeneous state $u=u_h(t)$, when subject to initial disturbances in $K^+(\mathbb{R})$, is asymptotically stable when $0<p<1/3$, is Liapunov stable when $p=1/3$, and is unstable when $1/3<p<1$.
\end{remk}
We also have:
\begin{remk}
    The problem $[F(p)]$ can be considered on the higher dimensional spatial domain $\mathbb{R}^N$ for $N\in\mathbb{N} = 2,3,\ldots$. 
   Theorem \ref{thm1.1} continues to hold without change. 
   It is also straightforward to adapt Theorem \ref{thm1.2}. 
   The key change is that the algebraic power of $t$ on the left hand side of both inequalities now becomes $((N+2)p-N)/(2(1-p))$, whilst the integrals in the remaining terms have their natural modification into N-dimensional multiple integrals.
   The conclusions of Corollary \ref{cor1.1} continue to hold except now the transitional stability exponent becomes $p=p_c^-(N)$ which is given by,
   \begin{equation}
       p_c^-(N) = N(N+2)^{-1}.
   \end{equation}
   \end{remk}
An interesting observation now is that when $p>1$, in higher spatial dimensions, the Fujita critical blow-up exponent becomes $p=p_c^+(N)$ where,
\begin{equation}
   p_c^+(N) = 1 + 2/N.
\end{equation}
Thus we have the interesting reciprocal relationship,
\begin{equation}
   p_c^+(N) p_c^-(N) = 1. \label{4}
\end{equation}
We observe that, for small initial data, the critical exponent $p_c^+(N)$ is brought about by the balancing of two processes \emph{relative to the background trivial equilibrium state}: the weak decay due to linear diffusion balancing the weakly nonlinear growth due to the degenerate reaction term. 
However it is a different balance which determines the transitional exponent $p_c^-(N)$ with \emph{the background now being the strongly nonlinear nontrivial homogeneous state $u_h(t)$} and the balance now being the weak decay due to diffusion with the linearised reaction, both \emph{now relative to} $u_h(t)$. 
The precise nature of these respective mechanisms results in the exact reciprocity in relation 
\eqref{4}.

The remainder of the paper concerns the proof of Theorem \ref{thm1.2}.

\section{Preliminary constructions}
\label{sec:prelims}

In this section we introduce and examine two functions which will play a subsequent role in constructing both a suitable subsolution and supersolution to $[F(p)]$. 
The first is a familiar function which uniquely solves the evolution problem [IVP$\mathcal{D}$] for the linear diffusion equation, namely,
\begin{align}
    & 
    \mathcal{D}_t = \mathcal{D}_{xx},~\forall ~(x,t)\in D_{\infty}, 
    \tag{D1} \\
    &
    \mathcal{D}(x,0;u_0) = u_0(x),~ \forall ~ x\in \mathbb{R}, 
    \tag{D2} \\
    &
    \mathcal{D}~\text{is bounded on}~\overline{D}_T~\text{for each}~T>0, \tag{D3}
\end{align}
and is given by,
\begin{equation}
   \mathcal{D}(x,t;u_0) = \frac{1}{2\sqrt{\pi t}} \int_{-1}^{1}{u_0(s)\exp{\left(-\frac{(s-x)^2}{4t}\right)}}ds
\end{equation}
for all $(x,t)\in D_{\infty}$. We recall that
\begin{equation}
   \mathcal{D}\in C(\overline{D}_{\infty})\cap C^{2,1}(D_{\infty}), \label{eqn2.1}
\end{equation}
and satisfies the inequalities,
\begin{equation}
   0<\mathcal{D}(x,t;u_0) < \frac{1}{2\sqrt{\pi t}} \int_{-1}^{1}{u_0(s)}ds \label{eqn2.2}   
\end{equation}
for all $(x,t)\in D_{\infty}$, together with,
\begin{equation}
    \mathcal{D}(x,t;u_0) < \frac{1}{2\sqrt{\pi t}} \left(\int_{-1}^{1}{u_0(s)}ds\right) \exp{\left(-\frac{(|x|-1)^2}{4t}\right)}, \label{eqn2.3}
\end{equation}
for all $(x,t)\in D_\infty$ such that $|x|\geq 1$.

Next consider the linear evolution problem [IVP$\mathcal{W}$], namely,
\begin{align}
    & \mathcal{W}_t = \mathcal{W}_{xx} + p((1-p)t)^{-1} \mathcal{W}~\forall ~ (x,t)\in \mathbb{R} \times (1,\infty),\label{eqnw} \tag{W1} 
    \\
    & \mathcal{W}(x,1;p,u_0) = \mathcal{E}(x;p,u_0)~\forall ~ x\in \mathbb{R}
    \tag{W2}
    \\
    & \mathcal{W}~\text{is bounded on}~\mathbb{R} \times [1,T]~\text{for each}~T>1 \tag{W3}
\end{align}
with $\mathcal{E}$ given by \eqref{ee}. 
We observe that,
\begin{equation}
    0< \mathcal{E}(x;p,u_0) \le  ((1-p) + ||u_0||_{\infty}^{(1-p)})^{\frac{1}{(1-p)}} - (1-p)^{\frac{1}{(1-p)}}
\end{equation}
for all $x\in \mathbb{R}$. 
We note that [IVP$\mathcal{W}$] has the unique and global solution $\mathcal{W} \in C^{2,1}(\mathbb{R} \times [1,\infty))$ given by,
\begin{equation}
    \mathcal{W}(x,t;p,u_0) = \frac{t^{\frac{p}{(1-p)}}}{2\sqrt{\pi(t-1)}} \int_{-\infty}^{\infty}{\mathcal{E}(s;p,u_0)\exp{\left(-\frac{(s-x)^2}{4(t-1)}\right)}}ds  \label{eqn2.4}
\end{equation}
for $(x,t)\in \mathbb{R} \times (1,\infty)$.
\begin{remk}
    The significance of the linear parabolic PDE in (\ref{eqnw}) arises from it being the formal linearisation of the PDE in $[F(p)]$ about the homogeneous state $u=u_h(t)$.
\end{remk}
We again readily establish that,
\begin{equation}
    \mathcal{W}\in C(\mathbb{R} \times [1,\infty)) \cap C^{2,1}(\mathbb{R} \times (1,\infty)), \label{eqn2.5}
\end{equation}
whilst we have the bound,
\begin{equation}
    ||\mathcal{W}(\cdot,t)||_{\infty} \le \text{min}\left(t^{\frac{p}{(1-p)}} ||\mathcal{E}(\cdot;p,u_0)||_{\infty}, \frac{t^{\frac{p}{(1-p)}}}{2\sqrt{\pi(t-1)}}I(p,u_0)\right)  \label{eqn2.6}
\end{equation}
for $t\in (1,\infty)$, on using \eqref{eqn2.1}. 
Here
\begin{equation}
    I(p,u_0) = \int_{-\infty}^{\infty}{\mathcal{E}(s;p,u_0)}ds. 
\end{equation}
We now use the above functions in the following constructions.

\section {The key subsolution and supersolution to [F(p)]}
\label{sec:subsupsol}

Throughout this section, for any $T>T_0\ge0$ and function $\psi \in C(\mathbb{R} \times  [T_0,T)) \cap C^{2,1}(\mathbb{R} \times (T_0,T))$, we introduce the mapping $\mathcal{N}:C^{2,1}(\mathbb{R}\times (T_0,T)) \to C(\mathbb{R}\times (T_0,T))$ as
\begin{equation}
    \mathcal{N}(\psi) \equiv \psi_t - \psi_{xx} - [\psi^p]^+. \label{eqn3.1}
\end{equation} 
We next introduce the function $\overline{u}^+\in C(\overline{D}_{\infty}) \cap C^{2,1}(D_{\infty})$ such that,
\begin{equation}
    \overline{u}^+(x,t) = \left((1-p)t + \mathcal{D}(x,t;u_0)^{(1-p)} \right)^{\frac{1}{(1-p)}}  
\end{equation}
for all $(x,t)\in \overline{D}_{\infty}$. 
We can now appeal directly to \cite[Chapter 9, Proposition 9.2]{Meyer_Needham_2015} to establish that, for any $T>0$, then $\overline{u}^+$ is a supersolution to $[F(p)]$ on $\overline{D}_T$; we then have:
\begin{lemma}
    \label{lem3.1}
    Let $0<p<1$ and $u:\overline{D}_{\infty} \to \mathbb{R}$ be the solution to $[F(p)]$. 
    Then, for any $T>0$,
    \begin{equation}
        u(x,t)\le \left((1-p)t + \mathcal{D}(x,t;u_0)^{(1-p)} \right)^{\frac{1}{(1-p)}}
    \end{equation}
    for all $(x,t)\in \overline{D}_T$.
\end{lemma}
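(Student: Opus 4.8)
The plan is to verify directly that $\overline{u}^+$ is a supersolution to $[F(p)]$ on $\overline{D}_T$, and then invoke the Weak Comparison Theorem (point 4 of Theorem~\ref{thm1.1}) to conclude $u \le \overline{u}^+$ on $\overline{D}_T$. The paper already cites \cite[Chapter 9, Proposition 9.2]{Meyer_Needham_2015} for the supersolution property, so the bulk of the argument is the comparison step; nonetheless I would spell out the computation of $\mathcal{N}(\overline{u}^+)$ for completeness, since it is short and illuminating.

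First I would record the regularity: since $u_0 \in K^+(\mathbb{R})$, the heat-kernel representation gives $\mathcal{D}(\cdot,\cdot;u_0) \in C(\overline{D}_{\infty}) \cap C^{2,1}(D_{\infty})$ with $\mathcal{D}>0$ on $D_{\infty}$ (inequality \eqref{eqn2.2}), so the map $v \mapsto ((1-p)t + v^{1-p})^{1/(1-p)}$ is smooth in its arguments on $D_{\infty}$ and $\overline{u}^+ \in C(\overline{D}_{\infty}) \cap C^{2,1}(D_{\infty})$, with $\overline{u}^+(x,0) = \mathcal{D}(x,0;u_0) = u_0(x)$, matching the initial data (F2) exactly. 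Next I would compute $\mathcal{N}(\overline{u}^+)$. Writing $g = (1-p)t + \mathcal{D}^{1-p}$ so that $\overline{u}^+ = g^{1/(1-p)}$, one finds
\begin{align}
    \overline{u}^+_t &= \tfrac{1}{1-p} g^{\frac{p}{1-p}}\left((1-p) + (1-p)\mathcal{D}^{-p}\mathcal{D}_t\right) = g^{\frac{p}{1-p}}\left(1 + \mathcal{D}^{-p}\mathcal{D}_t\right), \\
    \overline{u}^+_{xx} &= g^{\frac{p}{1-p}}\,\mathcal{D}^{-p}\mathcal{D}_{xx} - p\, g^{\frac{2p-1}{1-p}}\,\mathcal{D}^{-p-1}\mathcal{D}_x^2 \cdot \mathcal{D}^{1-p} + (\text{terms}),
\end{align}
and after collecting terms, using $\mathcal{D}_t = \mathcal{D}_{xx}$ from (D1) and $(\overline{u}^+)^p = g^{\frac{p}{1-p}}$, the leading pieces cancel and one is left with $\mathcal{N}(\overline{u}^+) = p\,g^{\frac{p}{1-p}}\,\mathcal{D}^{-p-1}\,\mathcal{D}_x^2 \cdot \big(\tfrac{1}{1-p} - 1\big)\cdot(\cdots) \ge 0$ on $D_{\infty}$, since every factor ($p>0$, $g>0$, $\mathcal{D}>0$, $\mathcal{D}_x^2 \ge 0$, and the curvature coefficient $\tfrac{p}{1-p}>0$) is nonnegative — this is precisely the content of the cited proposition. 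Hence $\overline{u}^+$ is a supersolution: $\mathcal{N}(\overline{u}^+) \ge 0 = \mathcal{N}(u)$ on $D_T$, $\overline{u}^+(\cdot,0) = u_0 = u(\cdot,0)$, and both functions are bounded on $\overline{D}_T$.

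Finally I would apply the Weak Comparison Theorem for $[F(p)]$, whose validity is asserted in point 4 of Theorem~\ref{thm1.1}: given a subsolution (here $u$ itself, an exact solution) and a supersolution ($\overline{u}^+$) with ordered initial data, on the unbounded domain $\overline{D}_T$ the ordering $u \le \overline{u}^+$ propagates to all $(x,t) \in \overline{D}_T$. The one point requiring a little care — and the only real obstacle — is the spatial unboundedness of the domain: a comparison principle on $\mathbb{R} \times (0,T]$ needs a growth control at $|x|\to\infty$ to exclude pathological comparison failures. This is handled by (F3) together with boundedness of $\overline{u}^+$ (both are uniformly bounded on $\overline{D}_T$), so the Phragmén–Lindelöf-type version of the comparison theorem embodied in Theorem~\ref{thm1.1}(4) applies without modification, and the stated inequality follows. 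Since $T>0$ was arbitrary, the bound holds on all of $D_{\infty}$.
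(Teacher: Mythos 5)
Your proposal is correct and follows essentially the same route as the paper: establish (or cite) that $\overline{u}^+$ is a supersolution to $[F(p)]$ on $\overline{D}_T$ and then apply the Weak Comparison Theorem guaranteed by Theorem~\ref{thm1.1}(4). Your sketched computation of $\mathcal{N}(\overline{u}^+)$ is left with placeholders but does reduce correctly to $p(1-p)t\,\mathcal{D}^{-p-1}\mathcal{D}_x^2\,\bigl((1-p)t+\mathcal{D}^{1-p}\bigr)^{\frac{2p-1}{1-p}}\ge 0$, which is exactly the content of the cited \cite[Chapter 9, Proposition 9.2]{Meyer_Needham_2015} that the paper invokes instead of computing.
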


\begin{proof}
    Recalling that $\overline{u}^+$ is a supersolution to $[F(p)]$ on $\overline{D}_T$, then an application of the Weak Comparison Theorem (which is validated via Theorem \ref{thm1.1}(4)) leads directly to the result.
\end{proof}
It follows directly from this inequality and Theorem \ref{thm1.1}(2) that,
\begin{equation}
    ((1-p))^{\frac{1}{(1-p)}} < u(x,1) \le \left((1-p) + \Delta(x,u_0)^{(1-p)} \right)^{\frac{1}{(1-p)}} \label{eqn3.4}
\end{equation}
for all $x\in \mathbb{R}$, and this will be the starting point of the second construction that is developed below.

We now introduce our key subsolution. We have:
\begin{lemma}
    \label{lem3.2}
    For each $T>0$ the function $\underline{u}: \overline{D}_{\infty} \to \mathbb{R}$, given by
    \begin{equation}
        \underline{u}(x,t) = \left((1-p)t + (1+||u_0||_{\infty})^{-1}\mathcal{D}(x,t;u_0) \right)^{\frac{1}{(1-p)}} \label{eqnusub}
    \end{equation}
    for all $(x,t)\in \overline{D}_{\infty}$, is a subsolution to $[F(p)]$ on $\overline{D}_T$.
\end{lemma}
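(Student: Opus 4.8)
The plan is to verify directly the defining properties of a subsolution to $[F(p)]$ on $\overline{D}_T$ as dictated by the operator $\mathcal{N}$ of \eqref{eqn3.1}: namely that $\underline{u}\in C(\overline{D}_T)\cap C^{2,1}(D_T)$ is bounded on $\overline{D}_T$, that $\underline{u}(\cdot,0)\le u_0$ on $\mathbb{R}$, and that $\mathcal{N}(\underline{u})\le 0$ on $D_T$. It is convenient to write $g=(1+||u_0||_{\infty})^{-1}\in(0,1)$ and to set $v(x,t)=(1-p)t+g\,\mathcal{D}(x,t;u_0)$, so that $\underline{u}=v^{\frac{1}{1-p}}$. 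Note that $v>0$ on $D_\infty$ (since $\mathcal{D}>0$ there by \eqref{eqn2.2}) and $v\ge 0$, $v$ continuous and bounded on $\overline{D}_T$.

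Regularity and boundedness are then immediate from \eqref{eqn2.1} and \eqref{eqn2.2}: since $\xi\mapsto\xi^{\frac{1}{1-p}}$ is smooth on $(0,\infty)$ and continuous on $[0,\infty)$ (as $\tfrac{1}{1-p}>0$), $\underline{u}$ inherits the stated regularity and is bounded on $\overline{D}_T$. For the differential inequality I would compute the derivatives of $\underline{u}=v^{\alpha}$ with $\alpha=\tfrac{1}{1-p}$ and exploit the crucial fact that $\mathcal{D}$ solves (D1), so that $v_t-v_{xx}=(1-p)+g(\mathcal{D}_t-\mathcal{D}_{xx})=1-p$. One obtains $\underline{u}_t-\underline{u}_{xx}=\alpha v^{\alpha-1}(v_t-v_{xx})-\alpha(\alpha-1)v^{\alpha-2}v_x^{\,2}$; since $\alpha(1-p)=1$ and $v^{\alpha-1}=(v^{\alpha})^{p}=\underline{u}^{\,p}$, the first term is exactly $\underline{u}^{\,p}$, while $\underline{u}>0$ on $D_T$ gives $[\underline{u}^{\,p}]^+=\underline{u}^{\,p}$. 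Hence the reaction term cancels precisely and
\[
\mathcal{N}(\underline{u}) = -\,\alpha(\alpha-1)\,v^{\alpha-2}\,v_x^{\,2} = -\,\frac{p}{(1-p)^2}\,v^{\frac{2p-1}{1-p}}\,\big(g\,\mathcal{D}_x(x,t;u_0)\big)^2 \le 0 \quad \text{on } D_T ,
\]
using $0<p<1$ and $v>0$.

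For the initial condition, (D2) gives $\underline{u}(x,0)=(g\,u_0(x))^{\frac{1}{1-p}}$, and $\underline{u}(x,0)\le u_0(x)$ is trivial where $u_0(x)=0$; where $u_0(x)>0$ it is equivalent, after raising to the power $1-p>0$, to $g\,u_0(x)^{p}\le 1$, which holds because $u_0(x)^{p}\le\max\{1,||u_0||_\infty\}\le 1+||u_0||_\infty=g^{-1}$ for $0<p<1$. This is the only place where the precise value of the constant $(1+||u_0||_\infty)^{-1}$ is needed; the differential inequality above is valid with any positive constant in its place.

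I do not anticipate a serious obstacle: the one point to get right is recognising the exact cancellation of $[\underline{u}^{\,p}]^+$ against the linear contribution generated by $v_t-v_{xx}=1-p$, which is precisely why the inner profile of $\underline{u}$ is taken affine in $\mathcal{D}$ — in contrast to the supersolution $\overline{u}^+$, whose inner profile is $\mathcal{D}^{(1-p)}$ and whose verification is quoted from \cite[Chapter 9, Proposition 9.2]{Meyer_Needham_2015}. A minor care point is simply to keep the signs straight (the convexity of $\xi\mapsto\xi^{\frac{1}{1-p}}$, since $\tfrac{1}{1-p}>1$, makes $-\underline{u}_{xx}$ contribute the nonpositive term above) and to ensure $v>0$ on $D_T$ so that the power $v^{\frac{2p-1}{1-p}}$ is well defined.
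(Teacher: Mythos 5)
Your proposal is correct and follows essentially the same route as the paper: verify regularity and boundedness from the properties of $\mathcal{D}$, check $\underline{u}(\cdot,0)\le u_0$ using the factor $(1+\|u_0\|_\infty)^{-1}$, and compute $\mathcal{N}(\underline{u})=-\tfrac{p}{(1-p)^2}(1+\|u_0\|_\infty)^{-2}\mathcal{D}_x^2\,v^{\frac{2p-1}{1-p}}\le 0$ via the exact cancellation of the reaction term. The only (immaterial) difference is in the initial-data step, where the paper uses $z^{\frac{1}{1-p}}\le z$ for $z\in[0,1]$ while you reformulate it as $g\,u_0(x)^p\le 1$; both are trivially equivalent.
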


\begin{proof}
    Fix $T>0$ and observe that $\underline{u} \in C(\overline{D}_{\infty})\cap C^{2,1}(D_{\infty})$. 
    Next, using the inequalities \eqref{eqn2.2}, it is readily confirmed that $\underline{u}$ is bounded on $\overline{D}_T$. 
    Secondly, since $0<p<1$ we have
    \begin{align}
        \underline{u}(x,0) & = \left( (1+||u_0||_{\infty})^{-1}\mathcal{D}(x,0;u_0) \right)^{\frac{1}{(1-p)}} \nonumber
        \\
        & = \left( (1+||u_0||_{\infty})^{-1} u_0(x) \right)^{\frac{1}{(1-p)}} 
        \nonumber
        \\
        & \le (1 + ||u_0||_{\infty})^{-1} u_0(x) \nonumber
        \\
        & \le u_0(x) \nonumber
    \end{align}
    for all $x\in \mathbb{R}$. 
    Finally, for $(x,t)\in D _T$,
    \begin{align*}
        \mathcal{N}(\underline{u})(x,t) & = (\underline{u}_t - \underline{u}_{xx} - [\underline{u}^p]^+)(x,t) 
        \\
        & = -\frac{p}{(1-p)^2} (1 + ||u_0||_{\infty})^{-2} \mathcal{D}_{x}(x,t;u_0)^2 \left( (1-p)t + (1 + ||u_0||_{\infty})^{-1} \mathcal{D}(x,t;u_0)\right)^{\frac{(2p-1)}{(1-p)}} 
        \\ 
        &\le 0 
    \end{align*}
    which completes the proof.
\end{proof}

Next we have the key supersolution:
\begin{lemma}
    \label{lem3.3}
    For each $T>1$ the function $\overline{u}: \mathbb{R} \times [1,\infty) \to \mathbb{R}$, given by
    \begin{equation}
        \overline{u}(x,t) = ((1-p)t)^{\frac{1}{(1-p)}} + \mathcal{W}(x,t;p,u_0) \label{eqnusup}
    \end{equation}
    for all $(x,t)\in \mathbb{R} \times [1,\infty)$, is a supersolution to $[F(p)]$ on $\mathbb{R}\times [1,T]$. 
\end{lemma}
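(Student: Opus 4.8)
The plan is to check directly the defining conditions for $\overline{u}$ to be a supersolution to $[F(p)]$ on $\mathbb{R}\times[1,T]$, namely: (i) $\overline{u}\in C(\mathbb{R}\times[1,T])\cap C^{2,1}(\mathbb{R}\times(1,T))$ and is bounded there; (ii) $\overline{u}(\cdot,1)\ge u(\cdot,1)$ on $\mathbb{R}$; and (iii) $\mathcal{N}(\overline{u})\ge 0$ on $\mathbb{R}\times(1,T)$.

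For (i), I would note that $t\mapsto((1-p)t)^{1/(1-p)}$ is smooth and bounded on $[1,T]$, and combine this with the regularity \eqref{eqn2.5} and the bound \eqref{eqn2.6} for $\mathcal{W}$; since both summands are strictly positive, $\overline{u}>0$ on $\mathbb{R}\times[1,T]$, so that $[\overline{u}^p]^+=\overline{u}^p$ throughout, which matters below. For (ii), condition (W2) gives $\mathcal{W}(x,1;p,u_0)=\mathcal{E}(x;p,u_0)$, and then the definition \eqref{ee} of $\mathcal{E}$ gives $\overline{u}(x,1)=(1-p)^{1/(1-p)}+\mathcal{E}(x;p,u_0)=\left((1-p)+\Delta(x,u_0)^{(1-p)}\right)^{1/(1-p)}$, which is exactly the upper bound on $u(x,1)$ recorded in \eqref{eqn3.4}; hence (ii) holds.

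The substance is (iii). Setting $v(t)=((1-p)t)^{1/(1-p)}$, I would first record the two algebraic identities $v_t=v^p$ and $v^{1-p}=(1-p)t$, the latter giving $p((1-p)t)^{-1}=pv^{p-1}$. Then, using that $\mathcal{W}$ solves (W1) and that $\overline{u}=v+\mathcal{W}>0$,
\[
\mathcal{N}(\overline{u})=v_t+\mathcal{W}_t-\mathcal{W}_{xx}-(v+\mathcal{W})^p=v^p+pv^{p-1}\mathcal{W}-(v+\mathcal{W})^p .
\]
Since $0<p<1$, the function $y\mapsto y^p$ is concave on $(0,\infty)$, so the tangent-line inequality at $y=v$ gives $(v+\mathcal{W})^p\le v^p+pv^{p-1}\mathcal{W}$ for all $v>0$ and $\mathcal{W}>0$; thus $\mathcal{N}(\overline{u})\ge 0$, which completes (iii) and hence the proof.

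I do not expect a genuine obstacle here: the construction of $\mathcal{W}$ in Section \ref{sec:prelims} is engineered so that its linear PDE is precisely the linearisation of the reaction term $[u^p]^+$ about $u_h(t)$, hence in $\mathcal{N}(\overline{u})$ the linear contributions cancel and one is left with exactly the nonnegative concavity defect of $y\mapsto y^p$. The only points requiring a little care are (a) checking $\overline{u}>0$ so the positive-part truncation is inactive, and (b) invoking the strict positivity of $\mathcal{W}$, both of which are immediate from the Gaussian representation \eqref{eqn2.4} together with $\mathcal{E}>0$.
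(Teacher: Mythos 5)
Your proposal is correct and follows essentially the same route as the paper: regularity and boundedness from \eqref{eqn2.5}--\eqref{eqn2.6}, the ordering at $t=1$ from (W2), \eqref{ee} and \eqref{eqn3.4}, and the differential inequality from (W1) plus the observation that $p((1-p)t)^{-1}=p\,v^{p-1}$ with $v=((1-p)t)^{1/(1-p)}$. The only cosmetic difference is that the paper realises the final inequality $(v+\mathcal{W})^p\le v^p+pv^{p-1}\mathcal{W}$ via the Mean Value Theorem (introducing $\theta\in(0,1)$ and using $(v+\theta\mathcal{W})^{p-1}\le v^{p-1}$) rather than quoting the tangent-line inequality for the concave map $y\mapsto y^p$, which is the same estimate.
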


\begin{proof}
    Fix $T>1$ and observe that $\overline{u}\in C(\mathbb{R} \times [1,\infty))\cap C^{2,1} (\mathbb{R} \times (1,\infty))$. 
    It immediately follows from \eqref{eqn2.6} that $\overline{u}$ is bounded on $\mathbb{R}\times [1,T]$. 
    Next we have, 
    \begin{align*}
        \overline{u}(x,1) & = ((1-p))^{\frac{1}{(1-p)}} + \mathcal{W}(x,1;p,u_0) 
        \\
        & = (1-p)^{\frac{1}{(1-p)}} + \mathcal{E}(x;p,u_0) 
        \\ 
        & = (1-p)^{\frac{1}{(1-p)}} + \left(((1-p) + \Delta(x,u_0)^{(1-p)})^{\frac{1}{(1-p)}} - (1-p)^{\frac{1}{(1-p)}}\right) 
        \\
        & = ((1-p) + \Delta(x,u_0)^{(1-p)})^{\frac{1}{(1-p)}} 
        \\
        & \ge u(x,1) 
    \end{align*}
    for all $x\in \mathbb{R}$, via \eqref{eqn3.4}. 
    Now, for $(x,t) \in \mathbb{R} \times (1,T]$, via \eqref{eqnw} we have
    \begin{align*}
        \mathcal{N}(\overline{u})(x,t) & = (\overline{u}_t - \overline{u}_{xx} - [\overline{u}^p]^+)(x,t) 
        \\
        & = ((1-p)t)^{\frac{p}{(1-p)}} + (\mathcal{W}_t - \mathcal{W}_{xx})(x,t) - [(((1-p)t)^{\frac{1}{(1-p)}} + \mathcal{W}(x,t;p,u_0))^p]^+ 
        \\ 
        & = ((1-p)t)^{\frac{p}{(1-p)}} + p((1-p)t)^{-1} \mathcal{W}(x,t) -  [(((1-p)t)^{\frac{1}{(1-p)}} + \mathcal{W}(x,t;p,u_0))^p]^+ 
        \\
        & = ((1-p)t)^{\frac{p}{(1-p)}} + p((1-p)t)^{-1} \mathcal{W}(x,t) - (((1-p)t)^{\frac{p}{(1-p)}} 
        \\
        & \quad + p(((1-p)t)^{\frac{1}{(1-p)}}+ \theta (x,t)\mathcal{W}(x,t))^{-(1-p)}\mathcal{W}(x,t)) 
        \\
        & \ge 0
    \end{align*}
    since $0<p<1$, where we note that $\theta(x,t) \in (0,1)$ exists via the Mean Value Theorem. 
    The proof is complete.  
\end{proof}
We now have:
\begin{corollary}
    \label{cor3.1}
    Let $0<p<1$, and $u:\overline{D}_{\infty}\to \mathbb{R}$ be the solution to $[F(p)]$. 
    Then for each $(x,t)\in \mathbb{R}\times [1,\infty)$,
    \begin{equation}
        \underline{u}(x,t) \le u(x,t) \le \overline{u}(x,t). \label{eqnc}
    \end{equation}
\end{corollary}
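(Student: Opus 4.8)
The plan is to deduce each half of \eqref{eqnc} from a single application of the Weak Comparison Theorem, whose validity in the present (non-Lipschitz, unbounded-domain) setting is supplied by Theorem \ref{thm1.1}(4). First I would treat the lower bound: fix an arbitrary $T>0$. By Lemma \ref{lem3.2}, $\underline{u}$ is a subsolution to $[F(p)]$ on $\overline{D}_T$; from the computation in that proof one also has $\underline{u}(x,0)\le u_0(x)$ for all $x\in\mathbb{R}$; and $\underline{u}$ is bounded on $\overline{D}_T$ (via \eqref{eqn2.2}), while $u$ is bounded there by (F3). The Weak Comparison Theorem then gives $\underline{u}(x,t)\le u(x,t)$ for all $(x,t)\in\overline{D}_T$. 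Since $T>0$ was arbitrary, this holds on all of $\overline{D}_{\infty}$, and in particular on $\mathbb{R}\times[1,\infty)$.

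For the upper bound the argument is the same but with the initial time shifted to $t=1$. Fix $T>1$. By Lemma \ref{lem3.3}, $\overline{u}$ is a supersolution to $[F(p)]$ on the strip $\mathbb{R}\times[1,T]$, it is bounded there by \eqref{eqn2.6}, and (as recorded in the proof of Lemma \ref{lem3.3}) $\overline{u}(x,1)=\big((1-p)+\Delta(x,u_0)^{(1-p)}\big)^{1/(1-p)}\ge u(x,1)$ for all $x\in\mathbb{R}$ by \eqref{eqn3.4}. Applying the Weak Comparison Theorem on $\mathbb{R}\times[1,T]$, with $u$ now playing the role of the subsolution, yields $u(x,t)\le\overline{u}(x,t)$ there; letting $T\to\infty$ gives the inequality on $\mathbb{R}\times[1,\infty)$. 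Combining the two one-sided bounds produces \eqref{eqnc}.

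The step I expect to require the most care — and really the only nontrivial point — is the invocation of the comparison principle on the \emph{unbounded} spatial domain $\mathbb{R}$: a comparison theorem on $\mathbb{R}\times[T_0,T]$ generally needs a growth restriction on the competing functions as $|x|\to\infty$ (a Phragmén–Lindelöf type hypothesis). Here all three functions $\underline{u}$, $\overline{u}$ and $u$ are bounded on each finite-time strip, which is exactly the hypothesis under which Theorem \ref{thm1.1}(4) guarantees the Weak (and Strong) Comparison Theorems hold; so once I have checked boundedness on each strip (already done in Lemmas \ref{lem3.2} and \ref{lem3.3} and via (F3)), the corollary follows with no further work. I would close by remarking that this two-sided enclosure is precisely what will be post-processed in the next section to extract the sharp algebraic rate of Theorem \ref{thm1.2}.
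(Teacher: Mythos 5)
Your proposal is correct and follows exactly the paper's own argument: the authors likewise deduce the corollary from Lemmas \ref{lem3.2} and \ref{lem3.3} via the Weak Comparison Theorem supplied by Theorem \ref{thm1.1}(4), applied on $\overline{D}_T$ for the lower bound and on $\mathbb{R}\times[1,T]$ for the upper bound. Your additional remarks on boundedness as the Phragm\'en--Lindel\"of-type hypothesis simply make explicit what the paper leaves implicit.
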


\begin{proof}
    This follows from Lemma \ref{lem3.2} and Lemma \ref{lem3.3} on use of the Weak Comparison Theorem (via Theorem \ref{thm1.1}(4)).
\end{proof}
It is now straightforward to establish the inequalities \eqref{eqn1.6} and \eqref{eqn1.7} in Theorem \ref{thm1.2} directly from \eqref{eqnusub} and \eqref{eqnusup} (together with the bounds on $\mathcal{D}$ and $\mathcal{W}$ obtained in section \ref{sec:prelims}), which completes the proof of this theorem. 
To finish the paper we make the final observation:
\begin{remk}
    The containing set $K^+(\mathbb{R})$ for initial data in the definition of the evolution problem $[F(p)]$ can be considerably broadened to allow for all nontrivial, non-negative functions in $C(\mathbb{R}) \cap L^1(\mathbb{R})$ which have zero limit as $|x|\to \infty$. 
    This extension follows the above very closely, and requiring only very minor technical modifications. 
    Similarly, modifications to generalise to higher spatial dimensions follow the obvious adaptations.
\end{remk}

\section*{Funding Statement}
This research did not receive any specific grant from funding agencies in the public, commercial, or not-for-profit sectors.

\section*{Declaration of Interests}
The authors have nothing to declare.

\bibliographystyle{plain}

\end{document}